\newtheorem{thm}{Theorem}[section]
\theoremstyle{definition}
\theoremstyle{remark}
\newtheorem{rem}[thm]{Remark}
\numberwithin{equation}{section}
\DeclareMathOperator{\Span}{span}
\newcommand{\Om}{\Omega}
\newcommand{\R}{\mathbb R}
\newcommand{\N}{\mathbb N}
\newcommand{\C}{\mathbb C}
\DeclareMathOperator{\Imm}{Im}
\DeclareMathOperator{\Rre}{Re}
\DeclareMathOperator{\Dom}{Dom}
\newcommand{\bd}{\textrm{b}}
\newcommand{\p}{\partial}
\newcommand{\z}{\bar z}
\newcommand{\w}{\bar w}
\newcommand{\dbar}{\bar\partial}
\newcommand{\dbars}{\bar\partial^*}
\newcommand{\vp}{\varphi}
\newcommand{\atopp}[2]{\genfrac{}{}{0pt}{2}{#1}{#2}}
\newcommand{\ep}{\epsilon}
\newcommand{\I}{\mathcal{I}}
\newcommand{\la}{\langle}
\newcommand{\ra}{\rangle}
\newcommand{\opK}{\mathcal K}
\newcommand{\opD}{\mathcal D}
\begin{document}

\title[The Bergman Kernel on Forms]{The Bergman Kernel on Forms: General Theory}

\author{Andrew Raich}

\thanks{The author was partially supported by NSF grant DMS-1405100. I would also like to thank Phil Harrington for several helpful discussions on this project.}

\address{Department of Mathematical Sciences, SCEN 309, 1 University of Arkansas, Fayetteville, AR 72701}
\email{araich@uark.edu}

\keywords{Bergman projection, Bergman kernel}

\subjclass[2010]{32A25,32A55,32W05}

\begin{abstract} The goal of this note is to explore the Bergman projection on forms. In particular, we show that some of most basic facts used to construct the Bergman kernel on functions, such as pointwise evaluation 
in $L^2_{0,q}(\Om)\cap\ker\dbar_q$,  fail for $(0,q)$-forms, $q \geq 1$. 
We do, however,
provide a careful construction of the Bergman kernel and explicitly compute the Bergman kernel on $(0,n-1)$-forms. 
In the ball in $\mathbb{C}^2$, we also show that the size of the Bergman kernel on $(0,1)$-forms is not governed by the control metric, in stark
contrast to Bergman kernel on functions.
\end{abstract}

\maketitle

%
%
\section{Introduction}\label{sec:intro}

On a domain $\Om\subset\C^n$, the Bergman projection $B_q$ is the
the orthogonal projection $B_q:L^2_{0,q}(\Om) \to \ker \dbar_q \cap L^2_{0,q}(\Om)$. The basic theory of the classical Bergman projection $B_0$ is, well, classical and can be found in any several complex variables
textbook, e.g., \cite{Kra01}. The Bergman projection $B_0$ is one of the most basic objects in the analysis of both one and several variables, and its mapping properties have been exhaustively (though not conclusively)
researched, as have formulas for its kernel. See, for example, \cite{Cat83,Cat87,KoNi65,FoKo72, PhSt77,   McN89, NaRoStWa89,  
ChDu06, NaSt06,McN94, McSt94, KhRa18, Fef74,DAn78, DAn94} for just a small samplings of the results in the literature. 
Surprisingly, when $q\geq 1$, only mapping properties have been investigated -- regularity properties for Bergman projects often follows from estimates of the $\dbar$-Neumann operator and Kohn's formula (see, for example,
\cite{HaRa15,BoSt90}).
There is essentially no literature about explicit construction of the kernels, pointwise size estimates, or geometry. 

A standard discussion of $B_0$ includes a formal construction of the integral kernel, its transformation law under biholomorphic mappings, and a computation of the Bergman kernel on
the ball (and perhaps the polydisk). One of the goals of this paper is to show that several of the main features of $B_0$ and its construction fail for $B_q$, $q\geq 1$. In particular, we show that:
\begin{enumerate}
\item Pointwise evaluation is not a bounded linear functional on $L^2_{0,q}(\Om)\cap \ker(\dbar_q)$;
\item It is unrealistic for a transformation formula to hold for $B_{p,q}(z,w)$ unless $p,q\in\{0,n\}$;
\item In $\C^2$, the Bergman kernel $B_1(z,w)$ on the ball does not behave according to the control geometry (in start constrast to $B_0(z,w)$).
\end{enumerate}

There is no additional information to be gained by looking at the Bergman projection on $L^2_{p,q}(\Om)$, so we focus on the $p=0$ case, except when we investigate the existence of transformation formulas
because the $B_{p,0}$ behaves worse that $B_0$.

We start by carefully constructing $B_q$, which, while
using well known Hilbert space and distribution theory, does not seem to appear in the literature. We then exploit Kohn's formula and the knowledge of the $\dbar$-Neumann problem in the top degree to give a general
formula for the Bergman projection $B_{n-1}$, and its associated integral kernel $B_{n-1}(z,w)$. We conclude the paper with a discussion on the ball. We compute $B_{n-1}$ explicitly and then restrict ourselves to the $\C^2$ case.
There, we observe  that the control geometry, which governs the size of $B_0(z,w)$, does not reflect the scaling present in the kernel $B_1(z,w)$.  We conclude with a remark about future directions.

Fix $q \geq 1$. The kernel,
$\ker\dbar_q$, is a closed subspace of $L^2_{0,q}(\Om)$, so the projection $B_q$ onto $\ker\dbar_q\cap L^2_{0,q}(\Om)$ can be given as a Fourier series in terms of a basis. 
The construction of $B_q$ can proceeds as follows:
suppose that
$\{\phi_j\}_{j=1}^\infty$ is an orthonormal basis of $\ker \dbar_q \cap L^2_{0,q}(\Om)$. The vector projection of $f \in L^2_{0,q}(\Om)$ onto $\Span \phi_j$ is $(f,\phi_j)\phi_j$ where the inner product
\[
(f,\phi_j) = \int_{\Om} \la f, \phi_j \ra\, dV = \int_\Om f \wedge \star \phi_j.
\]
where $\star$ is the Hodge-$\star$ operator (see, e.g., \cite[p.208]{ChSh01}) and $dV$ is Lebesgue measure. The orthogonal projection of $f$ on $\ker\dbar_q \cap L^2_{0,q}(\Om)$ is therefore given by the Fourier series
\[
B_q f(z) = \sum_{j=1}^\infty (f,\phi_j) \phi_j(z)
\]
where the sum converges in $L^2_{0,q}(\Om)$.

Working formally, we see that
\[
B_q f(z) = \sum_{j=1}^\infty \Big(\int_\Om f(w) \wedge \star \phi_j(w)\Big) \phi_j(z) = \int_\Om f(w) \wedge\Big(\sum_{j=1}^\infty \star\phi_j(w) \wedge \phi_j(z)\Big).
\]
This suggests that the Bergman kernel ought to be
\[
B_q(z,w) = \sum_{j=1}^\infty \star\phi_j(w) \wedge \phi_j(z)
\]
for \emph{any} orthonormal basis $\{\phi_j\}$ of $\ker \dbar_q \cap L^2_{0,q}(\Om)$. For this formula to be rigorous, of course, 
the sum defining $B_q(\cdot,w)$ must converge in $L^2_{0,q}(\Om)$, be independent of the orthonormal system $\{\phi_j\}$, and 
be the orthogonal projection onto $\ker\dbar_q\cap L^2_{0,q}(\Om)$. This is contain in Theorem \ref{thm:structure theorem}, our structure theorem for the Bergman projection. To state our results, we need the following notation.
Let $\I_q = \{ J = (j_1,\dots,j_q) \in \N^q : 1 \leq j_1 < \cdots < j_q \leq n\}$ be the set of increasing $q$-tuples and let 
\[
\widetilde{d\z_j} = d\z_1 \wedge \cdots \wedge \widehat{d\z_j} \wedge \cdots \wedge d\z_n
\]
where $\widehat{d\z_j}$ represents the omission of $d\z_j$ from the wedge product. We will also use the $[\hat I]$ to denote the $(n-|I|)$-tuple $\{1,\dots,n\}\setminus I$.

\begin{thm}\label{thm:structure theorem}
Let $\Om\subset\C^n$ be a domain and $1 \leq q \leq n-1$. Then: 
\begin{enumerate}
\item There exists an integral kernel $B_q(z,w)$ so that the Bergman projection $B_q:L^2_{0,q}(\Om) \to L^2_{0,q}(\Om)\cap\ker \dbar_q$ is given by
\[
B_q f(z) = \int_\Om f(w) \wedge B_q(z,w)
\]
for any $f \in L^2_{0,q}(\Om)$;
\item  Moreover, there exist bounded operators $B_{J'J}:L^2(\Om)\to L^2(\Om)$ so that if $f = \sum_{J\in\I_q} f_J\,d\z^J$, then
\[
B_q f(z) = \sum_{J,J'\in\I_q} B_{J'J} f_J(z)\, d\z^{J'};
\]
\item Given any orthonormal basis $\{\phi_j\} \subset L^2_{0,q}(\Om)\cap \ker\dbar$,
\[
B_q(z,w) = \sum_{j=1}^\infty \star\phi_j(w) \wedge \phi_j(z)
\]
where the sum converges in $L^2_{(0,q),(n,n-q)}(\Om\times\Om)$.
\end{enumerate}
\end{thm}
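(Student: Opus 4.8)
The plan is to prove the three assertions of part (3) — that the series represents the kernel of part (1), that its value is independent of the orthonormal basis, and that it converges in $L^2_{(0,q),(n,n-q)}(\Om\times\Om)$ — by reducing the first two to a Parseval argument and concentrating the analytic content in the convergence statement. Fix an orthonormal basis $\{\phi_j\}$ of $L^2_{0,q}(\Om)\cap\ker\dbar$. By Parseval, for every $f\in L^2_{0,q}(\Om)$ one has $B_q f=\sum_j (f,\phi_j)\,\phi_j$ in $L^2_{0,q}(\Om)$, with $(f,\phi_j)=\int_\Om f\wedge\star\phi_j$. Inserting this coefficient formula and comparing with the integral representation $B_q f(z)=\int_\Om f(w)\wedge B_q(z,w)$ from part (1), one sees that once the series $\sum_j \star\phi_j(w)\wedge\phi_j(z)$ is known to converge in $L^2_{(0,q),(n,n-q)}(\Om\times\Om)$, its limit represents the operator $B_q$; since the bounded operators $B_{J'J}$ of part (2) determine $B_q(z,w)$ uniquely as an $L^2$ kernel on $\Om\times\Om$, that limit must equal $B_q(z,w)$ and is therefore automatically independent of $\{\phi_j\}$.

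It remains to establish the convergence in $L^2_{(0,q),(n,n-q)}(\Om\times\Om)$, which is the heart of the matter. Writing $\Phi_j(z,w)=\star\phi_j(w)\wedge\phi_j(z)$ and $S_N=\sum_{j\le N}\Phi_j$, I would realize the product space through the pairing that sends a kernel $\Phi(z,w)$ to the action $f\mapsto\int_\Om f(w)\wedge\Phi(\cdot,w)$ on $L^2_{0,q}(\Om)$, so that $S_N$ corresponds to the finite-rank projection $P_N$ onto $\Span\{\phi_1,\dots,\phi_N\}$ and $\star$ acts fiberwise as an isometry. The goal is to show $\{S_N\}$ is Cauchy in $L^2_{(0,q),(n,n-q)}(\Om\times\Om)$; expanding $\|S_N-S_M\|^2$ and using the fiberwise isometry of $\star$ together with the orthonormality of $\{\phi_j\}$, I would aim to reduce the norm of the difference of partial sums to a tail quantity governed by the Bessel sums $\sum_{j>M}|(f,\phi_j)|^2=\|(B_q-P_M)f\|^2$, which tend to $0$ by Parseval. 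Applying this componentwise to the decomposition $f=\sum_{J\in\I_q} f_J\,d\z^J$ and summing over $J$ is the intended route to convergence, with the boundedness of each $B_{J'J}$ from part (2) used to pass from the componentwise control to the full kernel on the product space.

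The main obstacle is exactly this last reduction: the decisive difficulty is the bookkeeping that connects the inner product on $L^2_{(0,q),(n,n-q)}(\Om\times\Om)$ to the Bessel sums above. Because each $\Phi_j$ pairs its $z$-degree $d\z^J$ against its $w$-degree through the complementary tuple $[\hat J]$ carried by $\star\phi_j(w)$, the sign and conjugation conventions attached to $\star$ and to the wedge $d\z^J\wedge dw^{[\hat J]}$ must be tracked with care, so that the quadratic quantity controlling $\|S_N-S_M\|^2$ organizes into the tail sums $\sum_{j>M}|(f,\phi_j)|^2$ rather than into an uncontrolled count of basis elements. Getting these conventions exactly right, and thereby verifying that the product norm genuinely collapses onto the Bessel data, is where I expect the real work to lie; once it is in place, the identification of the limit with $B_q(z,w)$ and the basis-independence follow from the uniqueness of the kernel recorded in parts (1) and (2).
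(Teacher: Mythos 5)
There is a genuine gap, and it lies exactly where you placed the ``heart of the matter.'' Your plan is to show $\{S_N\}$ is Cauchy in $L^2_{(0,q),(n,n-q)}(\Om\times\Om)$ by collapsing $\|S_N-S_M\|^2$ onto the Bessel tails $\sum_{j>M}|(f,\phi_j)|^2$. But $\|S_N-S_M\|$ is the norm of a kernel: no test form $f$ appears in it, so there is nothing for a Bessel sum to attach to. Worse, the two ingredients you invoke --- the fiberwise isometry of $\star$ and the orthonormality of $\{\phi_j\}$ --- compute this norm exactly and in the wrong direction. Writing $\Phi_j(z,w)=\star\phi_j(w)\wedge\phi_j(z)$, the $\Phi_j$ are pairwise orthogonal in $L^2_{(0,q),(n,n-q)}(\Om\times\Om)$ with $\|\Phi_j\|=\|\phi_j\|\cdot\|\star\phi_j\|=1$, so for $M>N$ one has
\[
\bigl\|S_M-S_N\bigr\|^2_{L^2_{(0,q),(n,n-q)}(\Om\times\Om)}=M-N,
\]
which does not tend to zero. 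What Parseval actually supplies is strong operator convergence of the finite-rank projections $P_N$, which is strictly weaker than Hilbert--Schmidt (i.e., $L^2$-kernel) convergence; indeed, $L^2(\Om\times\Om)$ convergence of the series would force $B_q$ to be Hilbert--Schmidt, which fails whenever $\ker\dbar_q\cap L^2_{0,q}(\Om)$ is infinite dimensional (compare the classical $q=0$ picture on the disk, where $\int_{\Om\times\Om}|B_0(z,w)|^2\,dV\,dV=\int_\Om B_0(z,z)\,dV=\infty$). So the sign-and-conjugation bookkeeping you identify as the remaining work cannot rescue the reduction: the obstruction is structural, not notational. The paper's own proof at the corresponding step tests the tail against pairs, obtaining $|K_M(f\otimes g)-K_N(f\otimes g)|\leq\ep\|f\|_{L^2}\|g\|_{L^2}$, i.e., Cauchyness of the associated operators, and only then passes to a statement about the kernels --- it does not attempt (and your computation shows one cannot obtain) Cauchyness of $S_N$ in the product $L^2$ norm from orthonormality alone.

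Separately, your treatment of parts (1) and (2) is circular rather than merely deferred: you use the integral representation of part (1) to identify the limit of the series and the operators $B_{J'J}$ of part (2) to get uniqueness, but neither is proved in your proposal, and the only non-circular reading --- deducing (1) from the convergence in (3) --- collapses once the convergence step fails. The paper proves these independently of any basis expansion: part (2) follows from linearity of $B_q$ by defining $B_{J'J}$ componentwise (with $\|B_{J'J}\|_{L^2\to L^2}\leq 1$), and part (1) is obtained by applying the Schwartz Kernel Theorem to each $B_{J'J}$, so that $B_q(z,w)$ exists at least as a distributional kernel before any regularity of the sum $\sum_j\star\phi_j(w)\wedge\phi_j(z)$ is discussed. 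A repaired write-up should follow that order: establish (1) and (2) by the Schwartz-kernel route, and then claim the basis expansion only in a topology that the pairing estimate genuinely delivers, rather than in the $L^2(\Om\times\Om)$ norm your Bessel-tail reduction cannot reach.
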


We have additional information about the operators $B_{J'J}$ in the case that $q=n-1$.
\begin{thm}\label{thm:n-1 case}
Let $\Om\subset\C^n$ be a domain and $G(z,w)$ be the Green's function for the Laplacian $\triangle$. Then 
\begin{enumerate}
\item
\[
B_{n-1}f(z) = f(z) - \int_\Om f(w) \wedge  \vartheta_{n-1,z}\p^*_{n-1,w}N_n(z,w);
\]
\item 
\begin{equation}\label{eqn:B tilde j tilde k general}
B_{[\hat k] [\hat j]}(z,w) = \delta_{jk} \delta_z(w)+ (-1)^{n+j+k-1}4  \frac{\p^2 G(z,w)}{\p z_k\p\w_j}
\end{equation}
where $\delta_{jk}$ is the Kronecker $\delta$ and $\delta_z(w)$ is the Dirac $\delta$.
\item 
In the case that $\Om = B(0,1)$ is the unit ball then
\begin{align*}
&B_{[\hat k] [\hat j]}(z,w) = \delta_{jk}\delta_z(w)+ (-1)^{n+j+k-1} \frac{(n-1)!}{\pi^n} \bigg[  \frac{\delta_{jk}}{|z-w|^{2n}} - n\frac{(z_k-w_k)(\z_j-\w_j)}{|z-w|^{2n+2}} \\
&- \frac{\delta_{jk} - \z_j w_k }{(|z-w|^2 + (1-|w|^2)(1-|z|^2))^n} + n \frac{((z_k-w_k)+w_k(1-|z|^2))((\z_j-\w_j)-\z_j(1-|w|^2))}{(|z-w|^2 + (1-|w|^2)(1-|z|^2))^{n+1}}\bigg]
\end{align*}
\end{enumerate}
\end{thm}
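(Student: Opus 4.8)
The plan is to obtain all three parts from Kohn's formula, which in top degree reads $B_{n-1} = I - \dbars_n N_n \dbar_{n-1}$, together with the structural fact that the $\dbar$-Neumann operator $N_n$ on $(0,n)$-forms degenerates to a scalar Dirichlet problem. For part (1), I would let $N_n(z,w)$ denote the Schwartz kernel of $N_n$ (a $(0,n)$-form in $z$ and an $(n,0)$-form in $w$) and write
\[
N_n\dbar_{n-1}f(z) = \int_\Om \dbar_{n-1}f(w)\wedge N_n(z,w).
\]
A Stokes-type integration by parts transfers $\dbar_{n-1}$ from $f$ onto the kernel, where it becomes the formal adjoint $\p^*_{n-1,w}$ acting in $w$; the boundary term vanishes because $N_n(z,\cdot)$ obeys the $\dbar$-Neumann boundary conditions. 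Since $B_{n-1}f = f - \dbars_n(N_n\dbar_{n-1}f)$ and the Hilbert-space adjoint $\dbars_n$ acts on the kernel as the formal adjoint $\vartheta_{n-1,z}$ in $z$, this produces the formula in part (1).

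For part (2) the key observation is that $\dbar_n = 0$ on $(0,n)$-forms, so $\Box_n = \dbar_{n-1}\dbars_n$, and for $u = u_0\,d\z_1\wedge\cdots\wedge d\z_n$ the membership $u\in\dom(\dbars_n)$ forces the Dirichlet condition $u_0|_{\p\Om}=0$, since every $d\z_j$ is already present. Because the metric is Euclidean, $\Box_n$ acts on the single coefficient $u_0$ as $\tfrac14\triangle$, so $N_n$ is $4$ times the Dirichlet Green's operator and its kernel is governed by $4G(z,w)$. I would then read off the $([\hat k],[\hat j])$ entry of $\dbars_n N_n\dbar_{n-1}$ by following $f_0\,d\z^{[\hat j]}$ through the three maps: $\dbar_{n-1}$ produces $(-1)^{j-1}\tfrac{\p f_0}{\p\z_j}\,d\z_1\wedge\cdots\wedge d\z_n$; $N_n$ solves the Dirichlet problem and, after one integration by parts, moves the $\z_j$-derivative onto $4G$ as a $\w_j$-derivative; and extracting the $d\z^{[\hat k]}$ component after applying $\dbars_n$ contributes $\tfrac{\p}{\p z_k}$ with sign $(-1)^{k-1}$. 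Collecting the signs into $(-1)^{n+j+k-1}$ and adding the identity term $\delta_{jk}\delta_z(w)$ from $I$ yields \eqref{eqn:B tilde j tilde k general}.

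For part (3) I would insert the explicit Dirichlet Green's function for the ball, obtained by the method of images for the Laplacian on $\R^{2n}$:
\[
G(z,w) = c_{2n}\Big[\frac{1}{|z-w|^{2n-2}} - \frac{1}{(|w|\,|z-w^*|)^{2n-2}}\Big], \qquad w^* = \frac{w}{|w|^2},
\]
with $c_{2n} = \big((2n-2)\,\omega_{2n-1}\big)^{-1}$ and $\omega_{2n-1} = 2\pi^n/(n-1)!$, so that $4c_{2n} = (n-2)!/\pi^n$. The reflection term simplifies through the identity
\[
|w|^2|z-w^*|^2 = |z-w|^2 + (1-|z|^2)(1-|w|^2) =: S,
\]
verified by expanding both sides, which turns the second summand of $G$ into $-S^{1-n}$. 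I would then differentiate: the derivatives of $|z-w|^{2-2n}$ give the first two bracketed terms (the constant becoming $(n-2)!\,(n-1)/\pi^n = (n-1)!/\pi^n$), while for $S^{1-n}$ I would use $\tfrac{\p S}{\p z_k} = \z_k|w|^2 - \w_k$, $\tfrac{\p S}{\p\w_j} = |z|^2 w_j - z_j$, and $\tfrac{\p^2 S}{\p z_k\p\w_j} = \z_k w_j - \delta_{jk}$; rewriting the first-order factors as $z_k - w_k|z|^2 = (z_k-w_k)+w_k(1-|z|^2)$ and $\z_j|w|^2 - \w_j = (\z_j-\w_j)-\z_j(1-|w|^2)$ reproduces the last two bracketed terms and completes the formula.

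The main obstacle is the sign and combinatorial bookkeeping in part (2): justifying rigorously that the top-degree $\dbar$-Neumann boundary condition collapses to the Dirichlet condition (so that $N_n$ is genuinely the Dirichlet Green's operator), and tracking every sign coming from wedge reorderings, from the two formal adjoints, and from the integration by parts so that they assemble into the single factor $(-1)^{n+j+k-1}$; the precise placement of holomorphic versus antiholomorphic indices and conjugates in the final kernel is sensitive to the pairing conventions. The accompanying analytic point — that the kernel manipulations in parts (1) and (2) hold as $L^2$ or distributional identities rather than merely formally — is comparatively routine, following from the mapping properties of $N_n$ and the vanishing of $G$ on $\p\Om$.
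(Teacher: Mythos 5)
Your route is the same as the paper's: Kohn's formula $B_{n-1} = I - \vartheta_{n-1} N_n \dbar_{n-1}$, the reduction of the top-degree $\dbar$-Neumann boundary condition to the Dirichlet condition so that $N_n(z,w) = 4G(z,w)\, dw\wedge d\z$, an integration by parts in $w$ (justified by the vanishing of $G$ on $\bd\Om$) to move the derivative onto the kernel as $\p^*_{n-1,w}$, and the method of images on the ball with the identity $|w|^2|z-\tilde w|^2 = |z-w|^2 + (1-|z|^2)(1-|w|^2)$. Your part (3) computation, including the constants and the rewriting of the first-order factors, agrees with the paper's.

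The genuine gap is precisely the step you set aside as ``comparatively routine'': passing the $z$-adjoint inside the integral, i.e., the assertion that
\[
\vartheta_{n-1,z}\int_\Om f(w)\wedge \p^*_{n-1,w}N_n(z,w) \;=\; \int_\Om f(w)\wedge \vartheta_{n-1,z}\p^*_{n-1,w}N_n(z,w).
\]
The kernel on the right has components $\frac{\p^2 G(z,w)}{\p z_j\p\w_k}$, whose singular part is a second derivative of the Newtonian potential and blows up like $|z-w|^{-2n}$. This is \emph{not} locally integrable in $\C^n\cong\R^{2n}$, so the right-hand side --- and with it the formulas in parts (1) and (2) --- has no meaning as an absolutely convergent integral; it can only be interpreted as a principal value. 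Your stated justification (``the mapping properties of $N_n$ and the vanishing of $G$ on $\p\Om$'') does not address this: the boundary vanishing of $G$ only kills boundary terms in the $w$-integration by parts and says nothing about the diagonal singularity. What the paper does, and what your proposal is missing, is to verify the Calder\'on--Zygmund cancellation condition for
\[
\frac{\p^2}{\p z_j \p \w_k}\, \frac{1}{|w-z|^{2n-2}} \;=\; (n-1)\frac{\delta_{jk}}{|w-z|^{2n}} - n(n-1)\frac{(w_k-z_k)\overline{(w_j-z_j)}}{|w-z|^{2(n+1)}},
\]
namely that this kernel is homogeneous of degree $-2n$ and integrates to zero over every sphere centered at the origin (for $j\neq k$ it is a classical convolution CZ kernel; for $j=k$ one uses $\int_{\bd B(0,1)}|z_j|^2\, d\sigma = \frac1n\int_{\bd B(0,1)}|z|^2\, d\sigma$). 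Only after this verification does the kernel define a tempered distribution and the interchange become legitimate, which is why the paper states parts (1) and (2) with the integral ``understood in the sense of distributions.'' A secondary, lesser point: you flag the sign bookkeeping in part (2) as the main obstacle but never carry it out; the factor $(-1)^{n+j+k-1}$ comes out of the paper's explicit wedge computation $-\vartheta_{n-1,z}\p^*_{n-1,w}N_n(z,w) = (-1)^{n-1}4\sum_{j,k}(-1)^{j+k}\frac{\p^2 G}{\p z_j\p\w_k}\,\widetilde{dw_k}\wedge\widetilde{d\z_j}$ together with the pairing against $f(w)$, so that portion of your outline, while aimed correctly, remains to be done.
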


Our final result is the failure of the boundedness of pointwise evaluation in $L^2_{0,q}(\Om)\cap\ker \dbar_q$, $q\geq 1$. This result stands in stark contract to $B_0$, and, in fact, boundedness pointwise evaluation
in $L^2(\Om)$
is a critical fact for $B_0$ and (more generally) one of the defining assumptions in
the expansive theory of reproducing kernel Hilbert spaces, see, e.g., \cite{ChTo04}.
To observe the first instance of the boundedness of pointwise evaluation in the theory of the Bergman project, we simply need to recall the standard construction for $B_0$.
This construction works equally well for reproducing kernels in reproducing kernel Hilbert spaces.
Suppose that the evaluation functional $e_z(\vp) = \vp(z)$ was a bounded, linear functional, 
i.e., $|e_z(\vp)| \leq C \|\vp\|_{L^2_{0,q}(\Om)}$ for some constant $C$ that may depend on $z$ but not on $\vp$. 
This would mean for any $f \in \ker \dbar_q \cap L^2_{0,q}(\Om)$, $|f(z)| \leq C \|f\|_{L^2_{0,q}(\Om)}$ where $C = C(z)$ does not depend on $f$. This is critical for the following reason: for any $\{a_j\} \in \ell^2$, 
$f(z) = \sum_{j=1}^\infty a_j \vp_j(z) \in \ker \dbar_q \cap L^2_{0,q}(\Om)$, with the consequence that
\begin{align*}
|K(z,z)| = \sum_{j=1}^\infty |\vp_j(z)|^2\, dV(z) =\Big( \sup_{\atopp{\{a\}\in \ell^2}{\|a\|_{\ell^2}=1}} \Big| \sum_{j=1}^\infty a_j \vp_j(z)  \Big| \Big)^2\, dV(z) =  \sup_{\atopp{f\in\ker\dbar}{ \|f\|_{L^2}=1}} | f(z) |^2\, dV(z)
\end{align*}
Consequently, boundedness on the diagonal implies finiteness of $\sup_{\atopp{f\in\ker\dbar}{ \|f\|_{L^2}=1}}  | f(z) |$. From Theorem \ref{thm:n-1 case}, it is immediate that $B_{n-1}(z,w)$ blows
up as $w \to z$.

\begin{thm}\label{thm:pointwise evaluation fails}
Let $\Om\subset\C^n$ be a domain. If $1 \leq q \leq n$, then pointwise evaluation is not a bounded, linear functional on $L^2_{0,q}(\Om)\cap \ker\dbar_q$.
\end{thm}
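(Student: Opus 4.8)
The plan is to first reduce the assertion to a statement about the growth of pointwise norms over normalized $\dbar$-closed forms, exactly as set up in the paragraph preceding the theorem, and then to defeat any putative bound by an explicit concentration argument. Concretely, if $e_{z_0}$ were a bounded functional there would be a constant $C=C(z_0)$ with $|f(z_0)|\le C\|f\|_{L^2_{0,q}(\Om)}$ for all sufficiently regular $f\in\ker\dbar_q\cap L^2_{0,q}(\Om)$, forcing $\sup\{|f(z_0)| : f\in\ker\dbar_q\cap L^2_{0,q}(\Om),\ \|f\|=1\}<\infty$. So it suffices to produce, for each fixed $z_0\in\Om$, a sequence of smooth, compactly supported, $\dbar$-closed $(0,q)$-forms $f_k$ with $\|f_k\|_{L^2}=1$ but $|f_k(z_0)|\to\infty$.

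I would build these forms by concentration. Fix $\phi\in C_c^\infty(\C^n)$ supported in the unit ball with $\frac{\p\phi}{\p\z_1}(0)=1$ (for instance $\phi(z)=\z_1\chi(z)$ with $\chi\equiv 1$ near $0$), set $g_k(z)=\phi\big(k(z-z_0)\big)$, and define
\[
f_k = \dbar\big(g_k\,d\z^{J_0}\big)=\dbar g_k\wedge d\z^{J_0},\qquad J_0=(2,\dots,q)\in\I_{q-1}.
\]
Since $z_0$ is an interior point, $g_k$ is supported in $\Om$ for large $k$, so $f_k\in L^2_{0,q}(\Om)$; and $f_k$ is exact, hence $\dbar$-closed. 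Because $q\le n$, the tuple $\{1\}\cup J_0=(1,\dots,q)$ is a valid element of $\I_q$, and the coefficient of $d\z^{(1,\dots,q)}$ in $f_k$ equals (up to sign) $\frac{\p g_k}{\p\z_1}(z)=k\,\frac{\p\phi}{\p\z_1}\big(k(z-z_0)\big)$.

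The remaining computation is routine scaling, which I would not grind out in full. Substituting $u=k(z-z_0)$ gives $\|f_k\|_{L^2}^2=k^{2-2n}C_0$ with $C_0=\|\dbar\phi\wedge d\z^{J_0}\|_{L^2}^2>0$ a fixed constant, while the pointwise norm at the center obeys $|f_k(z_0)|\ge k\,\big|\frac{\p\phi}{\p\z_1}(0)\big|=k$. Hence $f_k/\|f_k\|$ lies in $\ker\dbar_q\cap L^2_{0,q}(\Om)$, has unit norm, and satisfies $\big|(f_k/\|f_k\|)(z_0)\big|\ge k^{\,n}/\sqrt{C_0}\to\infty$, contradicting the finiteness of the supremum above. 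As $z_0\in\Om$ was arbitrary, $e_{z_0}$ is unbounded for every $z_0$. This single family covers the whole range $1\le q\le n$, including the degenerate cases $q=n$ (where $\dbar_n=0$ and every $f_k$ is automatically closed) and $n=1$.

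The step I expect to require the most care is the reduction in the first paragraph: one must pin down what "pointwise evaluation" means for an a priori merely $L^2$ form and verify that the concentrating $f_k$ (which are genuinely continuous) are legitimate test objects for the supposed bound, so that the reproducing-kernel heuristic recalled before the statement becomes a rigorous dichotomy. As an independent corroboration in the top degree, I would note that for $q=n-1$ the conclusion is already visible from Theorem \ref{thm:n-1 case}: the kernel $B_{[\hat k][\hat j]}(z,w)$ contains the term $(-1)^{n+j+k-1}4\,\frac{\p^2 G(z,w)}{\p z_k\p\w_j}$, whose diagonal singularity is of order $|z-w|^{-2n}$ and hence non-integrable, so no bounded reproducing element $K_{z_0}$ can exist — precisely the obstruction the general argument exploits.
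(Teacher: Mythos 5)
Your proposal is correct and follows essentially the same route as the paper: both proofs concentrate a compactly supported $\dbar$-exact (hence $\dbar$-closed) $(0,q)$-form at the evaluation point via dilation, noting that the $L^2$ norm scales like $\ep^{n-1}$ (in complex dimension $n$) while the pointwise value of a distinguished coefficient scales like $\ep^{-1}$, so the normalized family defeats any putative bound. Your version is merely more explicit (a concrete $\phi=\z_1\chi$ and the multi-index $(1,\dots,q)$) than the paper's choice of a general $\psi$ with $(\dbar\psi(0))_J\neq 0$, so no substantive difference.
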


\begin{proof} Since forms are not functions, we consider pointwise evaluation to be the pointwise evaluation functionals $\vp \mapsto \vp_J$ for each $J\in \I_q$. 
Without loss of generality, we may suppose that $0 \in \Om$. 
Let $q \geq 1$, $J\in\I_q$, and $\psi\in (C^\infty_c)_{0,q-1}(\Om)$ so that $(\dbar\psi(0))_J \neq 0$.
Set $\vp(z) = \frac{\dbar \psi(z)}{\|(\dbar \psi)_J\|_{L^2(\Om)}}$. Then $\vp_\ep(z) = \ep^{-\frac n2} \vp(z/\ep) \in (C^\infty_c)_{0,q}(\Om) \cap \ker\dbar_q$
since $\dbar^2=0$. Moreover, our normalization ensure $\|(\vp_\ep)_J\|_{L^2(\Om)}=1$ for all $\ep>0$ but $|(\vp_\ep)_J(z)| \to \infty$ as $\ep\to 0$. 
\end{proof}

\begin{rem} It is very unlikely that the Bergman kernel $B_{p,q}(z,w)$ satisfies 
a nice transformation formula under biholomorphisms unless $p,q\in\{0,n\}$. The transformation law for $B_0$ essentially follows from the pullback relationship
$F^* \dbar = \dbar F^*$ and the fact that $J_\R F = |J_\C F|^2$ where $J_\R F$ is the determinant of the real Jacobian and $J_\C F$ is the determinant of the complex Jacobian. In general, while the pullback interacts nicely with
$\dbar$, it behaves poorly with respect to $L^2$-inner products. In particular, if $F:\Omega_1 \to \Omega_2$ is a biholomorphism and $\phi,\psi\in L^2_{p,q}(\Om_2)$, then
\begin{align*}
&\big(F^*\phi,F^*\psi\big) = \int_{\Omega_1} F^* \phi(w) \wedge \star \big(F^*\psi(w)\big) \\
&= \sum_{\atopp{I,I',K\in\I_p}{J,J',L'\in\I_q}} \int_{\Omega_1} \Big(\phi_{IJ}\circ F(w)\Big)\Big(\overline{\psi_{I',J'}\circ F(w)}\Big) \bigg|\frac{\p F^I}{\p w^K}\bigg| \bigg|\overline{\frac{\p F^J}{\p w^L}}\bigg|
\bigg|\overline{\frac{\p F^{[\hat I]}}{\p w^{[\hat K]}}}\bigg| \bigg| \frac{\p F^{[\hat J]}}{\p w^{[\hat L]}}\bigg|\, dV(w)
\end{align*}
where $\frac{\p F^I}{\p w^K}$ is the $p\times p$ minor of the complex Jacobian of the mapping $F = (F_1,\dots,F_n)$ given by
\[
\frac{\p F^I}{\p w^K} = \Big( \frac{\p F_{I_j}}{\p w_{K_k}} \Big)_{j,k=1}^p
\]
where $I = (I_1,\dots,I_p)$ and $K = (K_1,\dots,K_p)$ and similarly for the other terms.
The complicated product of determinants only simplifies dramatically in the cases $p,q\in\{0,n\}$ to $J_\R F$ and a change of variables may proceed as in the $B_0$ case.
\end{rem}

\subsection{Existence of the Bergman kernel and the proof of Theorem \ref{thm:structure theorem}} 
We know that the Bergman projection is a bounded, linear operator. We now show that $B_q$ is an integral operator and that the Bergman kernel exists.
Given $f\in L^2_{0,q}(\Om)$, we can write
\[
f = \sum_{J\in\I_q} f_J\, d\z^J
\]
The Bergman projection is a linear operator so that 
\[
B_q(f_J\, d\z^J) = \sum_{J'\in\I_q} (B_q f_J\, d\z^J)_{J'} \, d\z^{J'}.
\]
Define
\[
B_{J'J}' : L^2_{0,q}(\Om) \to L^2_{0,q}(\Om)
\]
by the mapping
\begin{equation}\label{eqn:B_J'J}
B_{J' J}' f = (B_q f_J\, d\z^J)_{J'} \, d\z^{J'}. 
\end{equation}
It is easy to see that the operator norm $\|B_{J'J}\|_{L^2 \to L^2} \leq 1$ and
\[
B_q f = \sum_{J,J'\in\I_q} B_{J'J}'\big( f_J\, d\z^J\big) 
\]

For each operator $B_{J'J}'$, define an auxiliary operator $B_{J'J}:L^2(\Om)\to L^2(\Om)$ that satisfies
\[
B_{J'J} f_J = \big(B'_{J'J}f_J\big)_{J'}
\]
Essentially, $B_{J'J}$ the operator that takes the coefficient of $f$ on $d\z^J$ and maps it to the $d\z^{J'}$ coefficient of $B_{J'J}' f$.

Recall the Schwartz Kernel Theorem \cite[Theorem 5.2.1]{Hor90a}. We state a version of it for our particular setup. Every function $K \in C(\Om\times\Om)$ defines an integral operator
$\opK$ from $C_c(\Om)$ to $C(\Om)$ by the formula
\[
\opK \vp(x_1)  = \int_\Om K(x_1,x_2) \vp(x_2)\, dV(x_2),\quad \vp\in C_c(\Om),\ x_1\in\Om.
\] 
The Schwartz Kernel Theorem extends this definition to arbitrary distributions $K$ if $\vp$ is restricted to $C^\infty_c(\Om)$ and $\opK\vp$ is allowed to be a distribution. The first observation is that if
$K\in C(\Om\times\Om)$, then
\[
\la \opK \vp,\psi\ra = K(\psi\otimes\vp) = \int_\Om\int_\Om K(x_1,x_2) \vp(x_2) \psi(x_1)\, dV(x_2)\, dV(x_1).
\]

\begin{thm}[Schwartz Kernel Theorem] 
Every $K \in \mathcal D'(\Om\times\Om)$ defines according to
\begin{equation}\label{eqn:SKT distribution equality}
\la \opK \vp,\psi\ra = K(\psi\otimes\vp); \quad \psi,\vp\in C^\infty_c(\Om)
\end{equation}
a linear map $\opK$ from $C^\infty_c(\Om)$ to $\opD'(\Om)$ which is continuous in the sense that $\opK \vp_j\to 0$ in $\opD'(\Om)$ if $\vp_j\to 0$ in $C^\infty_c(\Om)$. Conversely, to every such linear map
$\opK$ there is one and only one distribution $K$ such that \eqref{eqn:SKT distribution equality} is valid. One calls $K$ the kernel of $\opK$.
\end{thm}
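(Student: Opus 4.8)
The statement is the standard Schwartz kernel theorem, quoted here from \cite{Hor90a}; the plan is to recall its proof, which splits into an easy direct implication and a substantial converse. For the direct implication, suppose $K\in\mathcal{D}'(\Om\times\Om)$ and fix $\vp\in C^\infty_c(\Om)$. The map $\psi\mapsto\psi\otimes\vp$ is continuous from $C^\infty_c(\Om)$ into $C^\infty_c(\Om\times\Om)$, so $\psi\mapsto K(\psi\otimes\vp)$ is a continuous linear functional and defines a distribution $\opK\vp\in\mathcal{D}'(\Om)$ through \eqref{eqn:SKT distribution equality}. Linearity in $\vp$ is clear, and $\opK\vp_j\to 0$ in $\mathcal{D}'(\Om)$ whenever $\vp_j\to 0$ in $C^\infty_c(\Om)$ because $\psi\otimes\vp_j\to 0$ for each fixed $\psi$. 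Uniqueness in the converse follows at once: finite linear combinations of tensor products $\psi\otimes\vp$ are dense in $C^\infty_c(\Om\times\Om)$, so two kernels inducing the same $\opK$ must agree.

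For the converse, suppose $\opK:C^\infty_c(\Om)\to\mathcal{D}'(\Om)$ is continuous and set $B(\psi,\vp)=\la\opK\vp,\psi\ra$. First I would observe that $B$ is separately continuous: continuity in $\vp$ is the hypothesis on $\opK$, while continuity in $\psi$ holds because $\opK\vp$ is a distribution. Restricting to test functions supported in fixed compact sets $K_1,K_2\subset\Om$, the spaces of smooth functions supported in $K_1$ and in $K_2$ are Fr\'echet spaces, and a separately continuous bilinear form on a product of Fr\'echet spaces is jointly continuous by the Banach--Steinhaus theorem. Thus for each pair $K_1,K_2$ there are an integer $N$ and a constant $C$ with
\[
|B(\psi,\vp)|\leq C\,\sup_{|\alpha|\leq N}\sup_\Om|\p^\alpha\psi|\ \sup_{|\beta|\leq N}\sup_\Om|\p^\beta\vp|
\]
for all $\psi$ supported in $K_1$ and $\vp$ supported in $K_2$.

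It remains to produce a distribution $K$ on $\Om\times\Om$ realizing $B$. On the dense subspace of $C^\infty_c(\Om\times\Om)$ spanned by tensor products, define $K\big(\sum_j\psi_j\otimes\vp_j\big)=\sum_j B(\psi_j,\vp_j)$; bilinearity makes this independent of the representation. The crux is to show $K$ extends to a continuous functional on all of $C^\infty_c(\Om\times\Om)$. I would do this quantitatively: given $u\in C^\infty_c(\Om\times\Om)$ supported in $K_1\times K_2$, extend $u$ periodically to a torus containing $K_1\times K_2$ and expand it as a Fourier series $u=\sum_{\xi,\eta}c_{\xi\eta}\,(\chi_1 e_\xi)\otimes(\chi_2 e_\eta)$, where $e_\xi,e_\eta$ are exponentials and $\chi_1,\chi_2$ are fixed cutoffs equal to $1$ near $K_1,K_2$. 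Each summand is a tensor product, the coefficients $c_{\xi\eta}$ decay faster than any power of $(|\xi|+|\eta|)$ because $u$ is smooth, and the seminorms of $\chi_1 e_\xi$ and $\chi_2 e_\eta$ grow only polynomially. Feeding this into the estimate of the previous paragraph shows that $\sum_{\xi,\eta}c_{\xi\eta}B(\chi_1 e_\xi,\chi_2 e_\eta)$ converges absolutely and is bounded by finitely many sup-norms of derivatives of $u$; this is precisely the continuity needed for $K\in\mathcal{D}'(\Om\times\Om)$, and by construction $K(\psi\otimes\vp)=B(\psi,\vp)=\la\opK\vp,\psi\ra$.

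The main obstacle is this final extension. The separate continuity of $B$ and the resulting joint estimate on fixed-support spaces are routine, but converting the mere density of tensor products into an honest distribution on $\Om\times\Om$ requires controlling an arbitrary test function by tensor products in the right topology. The Fourier-series device is exactly what upgrades abstract density to the quantitative seminorm bound, and getting its bookkeeping right---matching the polynomial growth of the exponential seminorms against the rapid decay of $c_{\xi\eta}$---is where the real work lies.
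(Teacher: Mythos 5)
Your proposal is correct, but there is nothing in the paper to compare it against: the paper does not prove this statement at all. It is quoted verbatim as background, with the proof deferred to the citation \cite[Theorem 5.2.1]{Hor90a}, and is then immediately applied to the component operators $B_{J'J}$ to conclude that the Bergman kernel exists as a distribution. What you have written is essentially H\"ormander's own argument: the easy direction via continuity of $\psi\mapsto\psi\otimes\vp$, uniqueness via density of the span of tensor products, and for the converse the upgrade from separate to joint continuity on the Fr\'echet spaces $C^\infty_{K_1}$, $C^\infty_{K_2}$ by Banach--Steinhaus, followed by the quantitative extension step. The one cosmetic difference is that you realize a test function $u\in C^\infty_c(K_1\times K_2)$ as a rapidly convergent sum of tensor products by periodization and Fourier \emph{series}, whereas H\"ormander writes $u$ via the Fourier \emph{transform}, $u(x,y)=(2\pi)^{-2n}\int\hat u(\xi,\eta)\,(\chi_1(x)e^{ix\cdot\xi})(\chi_2(y)e^{iy\cdot\eta})\,d\xi\,d\eta$, and integrates the bilinear form against $\hat u$; both versions trade rapid decay of the coefficients against polynomial growth of the seminorms of $\chi_1 e_\xi$, $\chi_2 e_\eta$ in exactly the way you describe. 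Your sketch leaves the consistency checks implicit (that the Fourier-series definition agrees with $B$ on tensor products, and that the definitions for different compact pairs $K_1,K_2$ patch together), but these are routine given your joint-continuity estimate, and they are the same checks the textbook proof must make.
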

Since the maps $B_{J'J} :L^2(\Om)\to L^2(\Om)$ boundedly, they certainly map from $C^\infty_c(\Om) \to \opD'(\Om)$. Consequently, the Schwartz Kernel Theorem applies to each $B_{J'J}$. As a result, the Bergman kernel 
on $(0,q)$-forms exists as a distributional kernel, and we can write (for $f,g\in \opD_{0,q}(\Om)$)
\[
(B_qf, g) = \int_\Om \int_\Om f(w) \wedge B_q(z,w) \wedge * g(z)\, dV(w)\, dV(z) = K_q(f \otimes g)
\]
where the integral is understood in the distributional sense.

We now turn to establishing greater regularity for $B_q(z,w)$. Let $\{\phi_j\}$ be an orthonormal basis of $L^2_{0,q}(\Om) \cap \ker\dbar_q$,
\[
K_N(z,w) = \sum_{j=1}^N *\phi_j(w) \wedge \phi_j(z),
\]
and $\opK_N$ as the operator with kernel $K_N$. We will show that
\[
K_N(z,w) \to B_q(z,w) \text{ in }L^2_{(0,q),(n,n-q)}(\Om).
\]
Since $\opK_N f \to Bf$ in $L^2_{0,q}(\Om)$ and $\{\phi_j\}$ are orthogonal, there exists $N'>0$ so that if $M \geq N \geq N'$, then
\[
\Big| K_M(f\otimes g) - K_N(f \otimes g) \Big| = \Big| \int_{\Om\times\Om} \sum_{j=N+1}^M f(w) \wedge *\phi_j(w)\wedge\phi_j(z) \wedge *g(z) \Big| < \ep \|f\|_{L^2(\Om)}\|g\|_{L^2(\Om)}.
\]
Consequently, the sequence of operators $\{\opK_N\}$ with distributional kernels $\{K_N\}$ forms a Cauchy sequence acting on $L^2_{0,q}(\Om)\times L^2_{0,q}(\Om)$ and therefore converges to an operator $B'$ acting on
$L^2_{0,q}(\Om)\times L^2_{0,q}(\Om)$ and with distributional kernel $K(z,w)$. Moreover, since $K_N(z,w)$ forms a Cauchy sequence in $L^2_{0,q}(\Om)\otimes L^2_{n,n-q}(\Om)$, it follows that 
$B_q(z,w) \in L^2_{0,q}(\Om)\otimes L^2_{n,n-q}(\Om) \subset L^2_{(0,q),(n,n-q)}(\Om\times\Om)$. That this sum is independent of the basis is a standard Hilbert space fact. This concludes the proof of Theorem \ref{thm:structure theorem}.

%
%
\section{The Bergman projection $B_{n-1}$ and the proof of Theorem \ref{thm:n-1 case}, parts (1) and (2)}\label{sec:B n-1}

Recall that the boundary condition for a form $u = \sum_{J\in\I_q} u_J\, d\z^J \in L^2_{0,q}(\Om)$ to be an element of $\Dom(\dbars)$ is that
\[
\sum_{j=1}^n u_{jK} \frac{\p\rho}{\p z_j} =0 \text{ in $\bd\Om$ for all } K\in\I_{q-1}
\]
where
\[
u_{jK} = \sum_{J\in\I_q} \ep^{jK}_J u_J.
\]
If $q=n$ the boundary requirement is exactly that $u_{\{1,\dots,n\}} \frac{\p\rho}{\p z_j}=0$ for all $j=1,\dots,n$, i.e., $u=0$ on $\bd\Om$. This is the Dirichlet boundary condition and the $\dbar$-Neumann problem reduces to the standard
Dirichlet problem for the Laplacian. We normalize the Laplacian $\triangle$ so that $\triangle = -4\sum_{j=1}^n \frac{\p^2}{\p z_j\p\z_j}$.
Consequently, if $G(z,w)$ is the Green's function for the Laplacian
on $\Om$, then the $\dbar$-Neumann operator on the top degree is
\[
N_n(z,w) = 4 G(z,w)\, dw \wedge\, d\z
\]
with the notation $dw = dw_1\wedge \cdots \wedge dw_n$ and $d\z = d\z_1 \wedge \cdots \wedge d\z_n$. The integral operator $N_n$ applied to a $(0,n)$-form $F = f \, d\z$ is then
\[
N_n F(z) = \int_\Om F(w) \wedge N_n(z,w) \wedge d\z = 4\Big[\int_\Om f(w) G(z,w)\, dV(w) \Big]d\z.
\]
Thus we have an explicit integral kernel for $N_n$ for every case for which there is an explicit formula for $G(z,w)$.

Recall Kohn's formula for the Bergman projection:
\[
B_q = I - \dbars_q N_{q+1} \dbar_q = I - \vartheta_q N_{q+1} \dbar_q.
\]
We now compute $B_{n-1}$ and recall that $G(x,y)=0$ whenever either $x\in\bd\Om$ or $y\in\bd\Om$. Suppose $f\in L^2_{0,n-1}(\Om)$. Then
\begin{align*}
B_{n-1}f(z) &= f(z) - \vartheta_{n-1,z}  \int_\Om \dbar_{n-1,w}f(w) \wedge N_n(z,w)  \\
&= f(z) - \vartheta_{n-1,z} \int_\Om f(w) \wedge \p^*_{n-1,w} N_n(z,w).
\end{align*}
We would like to bring the operator $\vartheta_{n-1,z}$ inside the integral but this requires care because the Newtonian potential on $\C^n$ is
\[
\Phi(z) = \frac{(n-2)!}{4\pi^n} \frac{1}{|z|^{2n-2}}
\]
and two derivatives means that the kernel would blow up like a singular integral. 
In point of fact, this will not cause a problem because derivatives of two derivatives of $\Phi(z)$ generate a Calder\'on-Zygmund singular integral. But care certainly
must be taken! In particular, the Green's function $G(z,w)$ is built from the Newtonian potential and a harmonic function. 
Therefore, the singularity of $\frac{\p^2 G(z,w)}{\p z_j \p \w_k}$ can only come from the $\frac{\p^2}{\p z_j \p \w_k} \frac{1}{|w-z|^{2n-2}}$ which we now compute.
\[
\frac{\p^2}{\p z_j \p \w_k} \frac{1}{|w-z|^{2n-2}}
= (n-1)\frac{\delta_{jk}}{|w-z|^{2n}} - n(n-1) \frac{(w_k-z_k)\overline{(w_j-z_j)}}{|w-z|^{2(n+1)}}
\]
The case $j\neq k$ yields the kernel $ \frac{(w_k-z_k)\overline{(w_j-z_j)}}{|w-z|^{2(n+1)}}$ which is a classic Calder\'on-Zygmund convolution kernel -- homogeneous of degree $-2n$ and integrates to $0$ over 
any sphere centered around the origin.
The case $j=k$ is only slightly more complicated. Observe that if $\sigma_{2n-1}$ is the surface area of the unit sphere in $\C^n$, then by symmetry
\begin{align*}
\int_{\bd B(0,1)} \frac{n-1}{|z|^{2n} } &- \frac{n(n-1)|z_j|^2}{|z|^{2(n+1)}}\, d\sigma(z)
= (n-1)\sigma_{2n-1} - n(n-1) \int_{\bd B(0,1)} |z_j|^2\, d\sigma(z) \\
&= (n-1)\sigma_{2n-1} - n(n-1) \int_{\bd B(0,1)} \frac 1n \sum_{k=1}^n |z_j|^2\, d\sigma(z) = 0.
\end{align*}
By homogeneity, the integral is $0$ around any sphere, thus we can write
\[
B_{n-1}f(z) = f(z) -  \int_\Om f(w) \wedge \vartheta_{n-1,z}\p^*_{n-1,w} N_n(z,w)
\]
where the integral is taken in the sense of (tempered) distributions. A version of this formula (written directly in terms of the Green's function) appears in \cite[Theorem 15.3]{Bel92} for domains in $\C$ and the Bergman projection $B_0$.
Breaking down $B_{n-1}$ into its constituent parts, we compute
\begin{align*}
-\vartheta_{n-1,z}\p^*_{n-1,w} N_n(z,w) &= -\vartheta_{n-1,z}\p^*_{n-1,w} \Big\{ 4 G(z,w)\, dw \wedge d\z\Big\} \\
&= 4 \vartheta_{n-1,z} \Big\{ \sum_{k=1}^n (-1)^{k-1} \frac{\p G(z,w)}{\p\w_k} \widetilde{dw_k} \wedge d\z\Big\} \\
&= (-1)^{n-1}4 \sum_{j,k=1}^n (-1)^{j+k} \frac{\p^2 G(z,w)}{\p z_j\p\w_k}\, \widetilde{dw_k} \wedge \widetilde{d\z_j}.
\end{align*}
from which \eqref{eqn:B tilde j tilde k general} follows.

\subsection{The proof of Theorem \ref{thm:n-1 case}, parts (3) and (4)}\label{sec:examples}

We now restrict ourselves to the case $\Om$ is the unit ball on which the Green's function 
\[
G(z,w) = \Phi(z-w) - \Phi(|w|(z-\tilde w)) = \Phi(w-z) - \Phi(|z|(w-\tilde z))
\]
where $\tilde w = \frac{w}{|w|^2}$ is the reflection of $w$ across the unit sphere. 
Since 
\[
|z|^2|w-\tilde z|^2 - |w-z|^2 = |z|^2|w|^2 + 1 -|w|^2-|z|^2 = (1-|w|^2)(1-|z|^2),
\]
it follows that
\begin{equation}\label{eqn:Green on ball pretty}
G(z,w) = \frac{(n-2)!}{4\pi^n}\bigg( \frac{1}{|z-w|^{2n-2}} - \frac{1}{(|z-w|^2 + (1-|w|^2)(1-|z|^2))^{n-1}}\bigg)
\end{equation}

In this case, note that
\[
\frac{\p G(z,w)}{\p \w_k} =   \frac{(n-1)!}{4\pi^n}\bigg( \frac{z_k-w_k}{|z-w|^{2n}} - \frac{z_k-w_k + w_k(1-|z|^2)}{(|z-w|^2 + (1-|w|^2)(1-|z|^2))^n}\bigg)
\]
and so $\frac{\p G(z,w)}{\p \w_k} \equiv 0$ whenever $w\in B(0,1)$ and $z \in \bd B(0,1)$ (reflecting the fact that $N_n \dbar_{n-1}\in\Dom(\dbars_{n-1})$). Also,
\begin{align*}
&\frac{\p^2 G(z,w)}{\p z_j \p\w_k}
= \frac{(n-1)!}{4\pi^n} \bigg[  \frac{\delta_{jk}}{|z-w|^{2n}} - n\frac{(z_k-w_k)(\z_j-\w_j)}{|z-w|^{2n+2}} \\
&- \frac{\delta_{jk} - \z_j w_k }{(|z-w|^2 + (1-|w|^2)(1-|z|^2))^n} + n \frac{((z_k-w_k)+w_k(1-|z|^2))((\z_j-\w_j)-\z_j(1-|w|^2))}{(|z-w|^2 + (1-|w|^2)(1-|z|^2))^{n+1}}\bigg]
\end{align*}
from which part (3) of Theorem \ref{thm:n-1 case} follows.

%
%
\section{Control geometry and the unit ball in $\C^2$}\label{subsec:unit ball in C^2}
Observe that if $z \to \bd B(0,1)$, then
\[
B_{[\hat k] [\hat j]}(z,w) = \delta_{jk}\delta_z(w) - (-1)^{j+k} \frac{1}{\pi^2} \bigg[  \frac{\z_jw_k}{|z-w|^4} - 2 \frac{\z_j(z_k-w_k)(1-|w|^2)}{|z-w|^6}\bigg].
\]
as $z \to\bd B(0,1)$. Let $a_{jk}(z,w) = \frac{\z_jw_k}{|z-w|^4} - 2 \frac{\z_j(z_k-w_k)(1-|w|^2)}{|z-w|^6}$.

A defining function for $B(0,1)$ is $r(z) = |z|^2-1$. Consequently, the $(1,0)$ complex tangential vector field is $L = \z_2 \frac{\p}{\p z_1} - \z_1 \frac{\p}{\p z_2}$ and the complex normal
is given by $S = 2z_1 \frac{\p}{\p z_1} + 2z_2 \frac{\p}{\p z_2}$. Observe that $[L,\bar L] = - \Imm S$. If $z=(0,1)$ and $w = (w_1,1-h)$, then $a_{1k}((0,1),w)=0$ and
\[
a_{22}\big((0,1),(w_1,1-h)\big) = \frac{1+h}{(|w_1|^2+|h|^2)^2} - \frac{ 4h\Rre h}{(|w_1|^2 + |h|^2)^3}
\]
and
\[
a_{21}\big((0,1),(w_1,1-h)\big) = - \frac{w_1}{(|w_1|^2+|h|^2)^2} + \frac{4w_1 \Rre h}{(|w_1|^2+|h|^2)^3}.
\]
while the Bergman kernel 
\[
B_0\big((0,1),(w_1,1-h)\big) = -\frac{2}{\pi^2 \bar h^3}.
\]
For the proper size estimate comparisons with $B_0(z,w)$, we recall the control metric from \cite{NaStWa85} and the Bergman kernel estimates of \cite{NaRoStWa89,McN89}. At $(0,1)$, note that $L = \frac{\p}{\p z_1}$ and
$S = 2\frac{\p}{\p z_2}$ which means that the distance from $(0,1)$ in the $w_1$-direction is weighted by order 1 and in the $w_2$-direction by order 2. In other words, $d((0,1),(w_1,1-h)) \approx |w_1| + |h|^{1/2}$. It is clear that
$a_{2k}(z,w)$ observes different scaling and size estimates that $B_0(z,w)$ as $|w_1|$ appears with the same weighting as $|h|$. Once again, $B_1$ behaves quite differently than $B_0$!
%
%
\section{Conclusion}\label{sec:conclusion}
This paper checks the functional analysis to show that the Bergman projection has a well-defined integral kernel and that $B_{n-1}(z,w)$ is quite computable from the Green's function $G(z,w)$. Of course, computing the 
Green's function for domains of interest in several complex variables (and domains in general) is a complicated task. We will return to this topic in a future paper, in particular for $n=2$ case, as we can say much more there.

\bibliographystyle{alpha}
\bibliography{mybib}

\begin{thebibliography}{NRSW89}

\bibitem[Bel92]{Bel92}
S.~Bell.
\newblock {\em The {C}auchy transform, potential theory, and conformal
  mapping}.
\newblock Studies in Advanced Mathematics. CRC Press, Boca Raton, FL, 1992.

\bibitem[BS90]{BoSt90}
H.\ Boas and E.\ Straube.
\newblock Equivalence of regularity for the {B}ergman projection and the
  {$\overline \partial$}-{N}eumann operator.
\newblock {\em Manuscripta Math.}, 67(1):25--33, 1990.

\bibitem[BTA04]{ChTo04}
A.~Berlinet, , and C.~Thomas-Agnan.
\newblock {\em Reproducing kernel {H}ilbert spaces in probability and
  statistics}.
\newblock Kluwer Academic Publishers, Boston, MA, 2004.

\bibitem[Cat83]{Cat83}
D.\ Catlin.
\newblock Necessary conditions for subellipticity of the
  $\overline\partial$-{N}eumann problem.
\newblock {\em Ann.\ of Math.}, 117:147--171, 1983.

\bibitem[Cat87]{Cat87}
D.\ Catlin.
\newblock Subelliptic estimates for the $\overline\partial$-{N}eumann problem
  on pseudoconvex domains.
\newblock {\em Ann.\ of Math.}, 126:131--191, 1987.

\bibitem[CD06]{ChDu06}
P.~Charpentier and Y.~Dupain.
\newblock Estimates for the {B}ergman and {S}zeg{\"o} projections for
  pseudoconvex domains of finite type with locally diagonalizable {L}evi form.
\newblock {\em Publ.~Mat.}, 50(2):413--446, 2006.

\bibitem[CS01]{ChSh01}
S.-C.\ Chen and M.-C.\ Shaw.
\newblock {\em Partial Differential Equations in Several Complex Variables},
  volume~19 of {\em Studies in Advanced Mathematics}.
\newblock American Mathematical Society, 2001.

\bibitem[D'A78]{DAn78}
J.~P. D'Angelo.
\newblock A note on the {B}ergman kernel.
\newblock {\em Duke Math.\ J.}, 45(2):259--265, 1978.

\bibitem[D'A94]{DAn94}
J.~P. D'Angelo.
\newblock An explicit computation of the {B}ergman kernel function.
\newblock {\em J.~Geom.~Anal.}, 4(1):23--34, 1994.

\bibitem[Fef74]{Fef74}
C.~Fefferman.
\newblock The {B}ergman kernel and biholomorphic mappings of pseudoconvex
  domains.
\newblock {\em Invent. Math.}, 26:1--65, 1974.

\bibitem[FK72]{FoKo72}
G.~B.\ Folland and J.~J.\ Kohn.
\newblock {\em The {Neumann} problem for the {Cauchy}-{Riemann} Complex},
  volume~75 of {\em Ann.\ of Math.\ Stud.}
\newblock Princeton University Press, Princeton, New Jersey, 1972.

\bibitem[H{\"o}r90]{Hor90a}
L.\ H{\"o}rmander.
\newblock {\em The analysis of linear partial differential operators. {I}}.
\newblock Springer Study Edition. Springer-Verlag, Berlin, {S}econd edition,
  1990.

\bibitem[HR15]{HaRa15}
P.~Harrington and A.~Raich.
\newblock Closed range for $\bar\partial$ and $\bar\partial_b$ on bounded
  hypersurfaces in {S}tein manifolds.
\newblock {\em Ann. Inst. Fourier (Grenoble)}, 65(4):1711--1754, 2015.

\bibitem[KN65]{KoNi65}
J.J. Kohn and L.\ Nirenberg.
\newblock Non-coercive boundary value problems.
\newblock {\em Comm.\ Pure Appl.\ Math.}, 18:443--492, 1965.

\bibitem[KR]{KhRa18}
T.V. Khanh and A.~Raich.
\newblock Local regularity of the {B}ergman projection on a class of
  pseudoconvex domains of finite type.
\newblock {\em submitted}.
\newblock arXiv:1406.6532.

\bibitem[Kra01]{Kra01}
S.\ Krantz.
\newblock {\em Function theory of several complex variables}.
\newblock AMS Chelsea Publishing, Providence, RI, {R}eprint of the 1992 edition
  edition, 2001.

\bibitem[McN89]{McN89}
J.D.\ McNeal.
\newblock Boundary behavior of the {B}ergman kernel function in ${{\mathbb
  C}}^2$.
\newblock {\em Duke Math.\ J.}, 58:499--512, 1989.

\bibitem[McN94]{McN94}
J.D.\ McNeal.
\newblock Estimates on the {B}ergman kernels of convex domains.
\newblock {\em Adv.\ Math.}, 109:108--139, 1994.

\bibitem[MS94]{McSt94}
J.D.\ McNeal and E.M.\ Stein.
\newblock Mapping properties of the {B}ergman projection on convex domains of
  finite type.
\newblock {\em Duke Math.\ J.}, 73(1):177--199, 1994.

\bibitem[NRSW89]{NaRoStWa89}
A.\ Nagel, J.-P.\ Rosay, E.M.\ Stein, and S.\ Wainger.
\newblock Estimates for the {B}ergman and {S}zeg{\"o} kernels in ${{\mathbb
  C}}^2$.
\newblock {\em Ann.\ of Math.}, 129:113--149, 1989.

\bibitem[NS06]{NaSt06}
A.\ Nagel and E.M.\ Stein.
\newblock The $\dbar_b$-complex on decoupled domains in ${{\mathbb C}}^n$, $n
  \geq 3$.
\newblock {\em Ann.\ of Math.}, 164:649--713, 2006.

\bibitem[NSW85]{NaStWa85}
A.\ Nagel, E.M.\ Stein, and S.\ Wainger.
\newblock Balls and metrics defined by vector fields {I}: Basic properties.
\newblock {\em Acta Math.}, 155:103--147, 1985.

\bibitem[PS77]{PhSt77}
D.H.\ Phong and E.M.\ Stein.
\newblock Estimates for the {B}ergman and {S}zeg{\"o} projections on strongly
  pseudo-convex domains.
\newblock {\em Duke Math.\ J.}, 44(3):695--704, 1977.

\end{thebibliography}

\end{document}